\documentclass[a4paper,reqno,10pt,oneside]{amsart}

\usepackage{amsfonts,amssymb,latexsym,xspace,epsfig,graphics,color}
\usepackage{amsmath,enumerate,stmaryrd,xy}
\usepackage{bbm}
\usepackage{natbib}
\numberwithin{figure}{section}
\usepackage{subfigure}
\usepackage{dsfont}
\usepackage{a4wide}
\usepackage{enumitem}   

%%% pacotes para as figuras
\usepackage{tikz}
\usepackage{tkz-berge}
\usetikzlibrary{arrows,snakes,backgrounds,trees}
\usetikzlibrary{matrix,decorations.pathreplacing,positioning}
\usepackage{stmaryrd}
\newtheorem{theorem}{Theorem}[section]

\newtheorem{prop}[theorem]{Proposition}                          
\newtheorem{lemma}[theorem]{Lemma}
\newtheorem{corollary}[theorem]{Corollary}
\newtheorem{defn}[theorem]{Definition}

\newtheorem{exa}{Example}[section]

\def\T+{{\mathbb T_d^+}}

\def\A{\mathcal{A}}

\def\rank {\mathop {\rm rank}\nolimits}

\def\Der {\mathop {\rm Der}\nolimits}
\def\T {\mathcal{T}}

%\everymath{\displaystyle}

 % 1{\hskip -2.5 pt}\hbox{I}

%\makeatletter
%\@addtoreset{equation}{section}
%\makeatother

%\renewcommand\theequation{\thesection.\arabic{equation}}
%\renewcommand\thefigure{\thesection.\@arabic\c@figure}
%\renewcommand\thetable{\thesection.\@arabic\c@table}

\bibdata{prob}
\bibstyle{alpha} 

%%%%%%%%%%%%%%%%%%%%%%%%%%%%%%%%%%%%%%%%%%%%%%%%%%%%%%%%%%%%%
%%%%%%%%%%%%%%%%%%%%%%%%%%%%%%%%%%%%%%%%%%%%%%%%%%%%%%%%%%%%%

%\title[Accessibility Percolation on Trees]{Phase transition for an accessibility\\ percolation model on trees}

\title[]{Characterization theorems for the spaces of derivations of evolution algebras associated to graphs}

\author{Paula Cadavid, Mary Luz Rodi\~no Montoya and Pablo M. Rodriguez}

\date{}

\address{
\newline
Paula Cadavid, Pablo M. Rodriguez
\newline
Instituto de Ci\^encias Matem\'aticas e de Computa\c{c}\~ao, Universidade de S\~ao Paulo
\newline  
Caixa Postal 668, 13560-970 S\~ao Carlos, SP, Brazil
\newline
e-mails: pacadavid@usp.br,  pablor@icmc.usp.br
\newline
\newline
Mary Luz Rodi\~no Montoya
\newline
Instituto de Matem\'aticas - Universidad de Antioquia 
\newline
Calle 67 N$^{\circ}$ 53-108, Medell\'in, Colombia
\newline 
e-mail: mary.rodino@udea.edu.co 
}

\subjclass[2010]{17A36, 05C25, 17D92, 17D99}
\keywords{Genetic Algebra, Evolution Algebra, Derivation, Graph, Twin Partition} 
%\thanks{}

\begin{document}
\maketitle

\begin{abstract}
It is well-known that the space of derivations of $n$-dimensional evolution algebras with non-singular matrices is zero. On the other hand, the space of derivations of evolution algebras with matrices of rank $n-1$ has also been completely described in the literature. In this work we provide a complete description of the space of derivations of  evolution algebras associated to graphs, depending on the twin partition of the graph. For graphs without twin classes with at least three elements we prove that the space of derivations of the associated evolution algebra is zero. Moreover, we describe the spaces of derivations for evolution algebras associated to the remaining families of finite graphs. It is worth pointing out that our analysis includes examples of finite dimensional evolution algebras with matrices of any rank. 

\end{abstract}

\section{Introduction}

The Theory of Evolution Algebras is a current and active field of research with many applications and connections to other areas of mathematics. These algebras are non-associative algebras and form a special class of genetic algebras. We refer the reader to \cite{PMP2,camacho/gomez/omirov/turdibaev/2013,Elduque/Labra/2015,nunez/2014,COT} and references therein for an overview of recent results. An $n$-dimensional evolution algebra is defined as follows.

\smallskip
\begin{defn}\label{def:evolalg}
Let $\A:=(\A,\cdot\,)$ be an algebra over a field $\mathbb{K}$. We say that $\A$ is an evolution algebra if it admits a finite basis $S:=\{e_1,\ldots , e_n\}$, such that
\begin{equation}\label{eq:ea}
\begin{array}{cl}
e_i \cdot e_i =\sum_{k \in S} c_{ik} e_k,&\text{ for }i\in\{1,\ldots,n\},\\[.2cm]
e_i \cdot e_j =0,&\text{ for }i,j\in\{1,\ldots,n\}\text{ such that }i\neq j.
\end{array}
\end{equation} 
 \end{defn}
 
 \smallskip
 A basis $S$ satysfying \eqref{eq:ea} is called natural basis of $\mathcal{A}$. The scalars $c_{ik}\in \mathbb{K}$ are called the structure constants of $\mathcal{A}$ relative to $S$ and $(c_{ik})$ is called the matrix of $\A$ relative to $S$. {\color{black} In what follows, we always assume that $\text{char}(\mathbb{K})=0$.} The first and best general reference of a Theory of Evolution Algebras is the book of Tian \cite{tian}, were the author states the first properties for these structures as well as an interesting correspondence between this subject and Probability Theory; more precisely, with the Theory of Markov Chains. 
 
In this work we are interested in the problem of characterizing the space of derivations of a given evolution algebra. The advantage of describing this space lies in the fact that it induces a Lie algebra which may be used as a tool for studying the structure of the original algebra, see \cite{holgate} and the references given there for a deeper discussion of this subject in genetic algebras. A derivation is defined, as usual, as follows. 

\smallskip
\begin{defn}
Let $\A$ be an (evolution) algebra. A derivation of $\A$ is a linear operator $d: \A \rightarrow  \A$ such that 
$$d(u \cdot v)= d(u) \cdot  v + d(v)\cdot u,$$
for all $u,v \in \A$. The space of all derivations of the (evolution) algebra $\A$ is denoted by $\Der(\A)$.
\end{defn}

\smallskip

The space of all the derivations of an algebra has already been completely described for some genetic algebras, see \cite{COT,costa1,costa2,gonshor,gonzales,peresi}. {\color{black}We refer the reader to \cite{holgate} for a reference providing the interpretation of a derivation on a genetic algebra by means of the equality of two genetically meaningful expressions.}

For the case of an evolution algebra, a complete characterization of such space is still an open question. In \cite{tian} it is observed that a linear operator $d$ such that $d(e_i)=\sum_{k=1}^{n} d_{ik}e_k$ is a derivation of the evolution algebra $\A$ if, and only if, it satisfies the following conditions: 

\begin{alignat}{1}
c_{jk}d_{ij}+ c_{ik}d_{ji}=0, &\,\,\, \text{ for }i,j,k\in\{1,\ldots,n\}\text{ such that }i\neq j,\label{eq:der1}\\
\sum_{k=1} ^{n} c_{ik}d_{kj}=2c_{ij}d_{ii}, &\,\,\, \text { for }i,j\in\{1,\ldots,n\}.\label{eq:der2}
\end{alignat}

Therefore, Equations \eqref{eq:der1} and \eqref{eq:der2} are the starting point whether one want to obtain a description of the space of derivations for a particular evolution algebra $\A$. As far as we known, the only almost-general result regarding the space of derivations of a finite dimensional evolution algebra is obtained by \cite{COT}. In that work the authors {\color{black}consider $\mathbb{K}=\mathbb{C}$ and they} prove that the space of derivations of $n$-dimensional evolution algebras with non-singular matrices is zero; in addition, they describe the space of derivations of evolution algebras with matrices of rank $n-1$. {\color{black}More recently, \cite{farrukh} describes the space of derivations of evolution algebras with maximal nilindex restricted to the case where $\text{char}(\mathbb{K})=0$.} 

 The aim of this paper is twofold. On the one hand, we give a complete characterization of the space of derivations in a special family of $n$-dimensional evolution algebras, i.e. the ones associated to finite graphs. Furthermore, our results cover examples of evolution algebras with matrices of rank $k$, where $k\in \{2,\ldots,n\}$. On the other hand, we continue in studying evolution algebras associated to graphs, a subject which was initiated by \cite{nunez/2013,tian}, and considered more recently by \cite{PMP,PMP2}. We shall see that the twin partition of the given graph is crucial for characterizing the space of derivations of its respective evolution algebra. We emphasize that this provides an alternative to the criteria developed by \cite{COT}, for the case of evolution algebras associated to graphs. 
 
The paper is organized as follows. In Section 2 we review some of the standard notation on Graph Theory,  and we state our main results. Section 3 is devoted to the proofs of these results.

\section{Main results}

\subsection{Evolution algebra of a graph}

In order to present our results we start with some standard definitions and notation of Graph Theory. A finite graph $G$ with $n$ vertices is a pair $(V,E)$ where $V:=\{1,\ldots,n\}$ is the set of vertices and $E:=\{(i,j)\in V\times V:i\leq j\}$ is the set of edges. If $(i,j)\in E$  or $(j,i)\in E$ we say that $i$ and $j$ are {\it neighbors}, and we denote the set of neighbors of a vertex $i$ by $\mathcal{N}(i)$. In general, given a subset $U\subset V$, we denote $\mathcal{N}(U):=\{j\in V: j\in \mathcal{N}(i) \text{ for some }i\in U\}$, and $U^c:=\{j\in V: j\notin U\}$. The {\it degree} of vertex $i$, denoted by $\deg(i)$, is  the cardinality of the set $\mathcal{N}(i)$. The {\it adjacency matrix}  $A_G=(a_{ij})$ of $G$ is an $n\times n$ symmetric matrix such that $a_{ij}=1$ if $i\in \mathcal{N}(j)$, and $a_{ij}=0$ otherwise. Note that, for any $k \in V$, $\mathcal{N}(k):=\{\ell \in V: a_{k\ell}=1\}$. We say that vertices $i$ and $j$ of a graph $G$ are {\it twins}\footnote{For the sake of simplicity we use the nomenclature of twins but, since we consider graphs without loops, it coincides with the concept of non-adjacency twins, or false twins, depending of the reference.} if they have exactly the same set of neighbors, i.e. $\mathcal{N}(i)=\mathcal{N}(j)$. We notice that by defining the relation $\sim_{t}$ on the set of vertices $V$ by $i\sim_{t} j$ whether $i$ and $j$ are twins, then $\sim_{t}$ is an equivalence relation. An equivalence class of the twin relation is referred to as a {\it twin class}. In other words, the twin class of a vertex $i$ is the set $\{j\in V:i \sim_{t} j\}$. The set of all twin classes of $G$ is denoted by $\Pi(G)$ and it is referred to as the {\it twin partition} of $G$. A graph is {\it twin-free} if it has no twins. All the graphs we consider are connected, i.e. for any $i,j\in V$ there exists a sequence of vertices $i_0,i_1,i_2,\ldots,i_n$ such that $i_0=i$, $i_n=j$ and $i_{k+1}\in \mathcal{N}(i_{k})$ for all $k\in\{0,1,\ldots,n-1\}$. For simplicity, we consider only graphs which are simple, i.e. without multiple edges or loops. 

\smallskip
The evolution algebra associated to a given graph $G$, and denoted by $\A(G)$, is defined by letting $c_{ij}= a_{ij}$ for any $i,j\in V$, see \cite[Section 6.1]{tian}. %In other words, $\A(G)$ is defined as follows.

\smallskip
\begin{defn}\label{def:eagraph}
Let $G=(V,E)$ be a graph with adjacency matrix given by $A_G=(a_{ij})$. The evolution algebra associated to $G$ is the algebra $\A(G)$ with natural basis $S=\{e_i: i\in V\}$, and relations

\[
 \begin{array}{ll}\displaystyle
e_i \cdot e_i = \sum_{k\in V} a_{ik} e_k, \text{  for  }i \in  V,\\[.5cm]
e_i \cdot e_j =0,\text{ if }i\neq j.
\end{array}
\] 
\noindent

\end{defn}

\smallskip
We notice that another way of stating the relations for $e_i \cdot e_i$ in the previous definition is to say:
$$e_i \cdot e_i = \sum_{k\in \mathcal{N}(i)} e_k, \text{ for  }i \in  V.$$

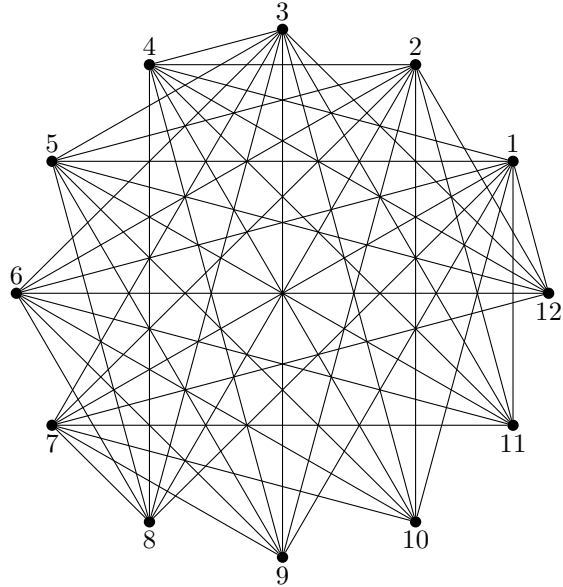
\begin{figure}[!h]
\begin{center}

\begin{tikzpicture}[scale=.7]
\GraphInit[vstyle=Simple]
    \SetGraphUnit{1}
    \tikzset{VertexStyle/.style = {shape = circle,fill = black,minimum size = 2.5pt,inner sep=1.5pt}}
  \begin{scope}[xshift=10cm]
\grEmptyCycle[prefix=a,RA=5]{12}%

\draw (a1)--(a4);
\draw (a1)--(a5);
\draw (a1)--(a6);
\draw (a1)--(a7);
\draw (a1)--(a8);
\draw (a1)--(a9);
\draw (a1)--(a10);
\draw (a1)--(a11);
\draw (a1)--(5,0);

\draw (a2)--(a4);
\draw (a2)--(a5);
\draw (a2)--(a6);
\draw (a2)--(a7);
\draw (a2)--(a8);
\draw (a2)--(a9);
\draw (a2)--(a10);
\draw (a2)--(a11);
\draw (a2)--(5,0);

\draw (a3)--(a4);
\draw (a3)--(a5);
\draw (a3)--(a6);
\draw (a3)--(a7);
\draw (a3)--(a8);
\draw (a3)--(a9);
\draw (a3)--(a10);
\draw (a3)--(a11);
\draw (a3)--(5,0);

\draw (a4)--(a8);
\draw (a4)--(a9);
\draw (a4)--(a10);
\draw (a4)--(a11);
\draw (a4)--(5,0);

\draw (a5)--(a8);
\draw (a5)--(a9);
\draw (a5)--(a10);
\draw (a5)--(a11);
\draw (a5)--(5,0);

\draw (a6)--(a8);
\draw (a6)--(a9);
\draw (a6)--(a10);
\draw (a6)--(a11);
\draw (a6)--(5,0);

\draw (a7)--(a8);
\draw (a7)--(a9);
\draw (a7)--(a10);
\draw (a7)--(a11);
\draw (a7)--(5,0);

\node[above] at (a1) {$1$};
\node[above] at (a2) {$2$};
\node[above] at (a3) {$3$};
\node[above] at (a4) {$4$};
\node[above] at (a5) {$5$};
\node[above] at (a6) {$6$};

\node[below] at (a7) {$7$};
\node[below] at (a8) {$8$};
\node[below] at (a9) {$9$};
\node[below] at (a10) {$10$};
\node[below] at (a11) {$11$};
\node[below] at (5,0) {$12$};

\end{scope}
\end{tikzpicture}
\caption{Complete $3$-partite graph $K_{3,4,5}$. Partitions are given by $V_1=\{1,2,3\}$, $V_2=\{4,5,6,7\}$ and $V_3=\{8,9,10,11,12\}$.}\label{fig:npartite}
\end{center}
%\label{FIG:cycle}
\end{figure}

\smallskip
\begin{exa}\label{exa:npartite}
Let $K_{a_1, a_2, \ldots, a_m}$ be a complete $m$-partite graph, with $m\geq 2$, and partitions of sizes $a_1, a_2, \ldots, a_m$, where $a_i \geq 1$ for $i\in \{1,2,\ldots,m\}$. This is a graph for which the set of vertices is partitioned into $m$ disjoint sets, with sizes $a_1, a_2, \ldots, a_m$, in such a way that there is no edge connecting two vertices in the same subset, and every possible edge that could connect vertices in different subsets is part of the graph, see Fig. \ref{fig:npartite}. The resulting evolution algebra associated to this graph, denoted by $\mathcal{A}(K_{a_1, a_2, \ldots, a_m})$, is given by the natural basis 
$\{e_1,  \ldots, e_{a_1 + \cdots + a_m}  \}$ and relations:
\smallskip
%$\{e_1,\ldots,e_{a_1},e_{a_1 +1},\ldots ,e_{a_1 + a_2}, \ldots, e_{a_1 + \cdots + a_m}  \}$ and the relations:
\[
\begin{array}{cl}\displaystyle

 e_i^2   = \sum_{j\notin V_{\ell}} e_{j}, &\text{ for }i\in V_{\ell}, \text{ and }\ell\in\{1,2,\ldots,m\}, \\[.4cm]
  e_i \cdot e_j =  0, &\text{ for }i\neq j.
\end{array}
\]

\end{exa}

\smallskip
The first definitions of evolution algebras associated to some families of graphs have been formalized by \cite{nunez/2014,nunez/2013}, were the authors also study properties related to these algebras. More recently, \cite{PMP,PMP2} study the existence of isomorphisms between these algebras and the evolution algebras associated to the random walk on the same graph. This is a subject that has been remained as an open question since 2008, see \cite{tian,tian2} for further details. We refer the reader to  \cite{Elduque/Labra/2015} for another reference related to the interplay between evolution algebras and graphs. In that work, the authors consider a digraph associated to any evolution algebra, which leads to new algebraic results on this class of algebras and are useful to provide new natural proofs of some known results.

\bigskip
\subsection{Space of derivations of the evolution algebra of a graph}

Our aim is to characterize the space of derivations of an evolution algebra associated to a graph. In the rest of the paper we shall assume that $G=(V,E)$ is a finite graph with $n$ vertices and $d \in \Der(\A(G))$ is such that 
\begin{equation}\label{eq:derexp}
d(e_i)=\sum_{k=1}^{n}d_{ik}e_k,
\end{equation} 

\noindent
for any $i\in V.$ We shall consider graphs with at least three vertices since in other case $d=0$ by a direct application of \eqref{eq:der1} and \eqref{eq:der2}. Our description of $\Der(\A(G))$ will depends on the twin partition of $G$. More precisely,  our characterization will depends on the set  $\Gamma_3(G)\subset \Pi(G)$ formed by all twin classes of $G$ with at least three vertices. We start with a necessary condition for $d\neq 0$.

\begin{theorem}\label{prop:necessary}
Let $G=(V,E)$ be a finite graph, with $|V|\geq 3$, and let $d\in \Der(\A(G))$. If $\Gamma_3(G)=\emptyset$ then $d=0$.
\end{theorem}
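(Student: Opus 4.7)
The plan is to extract maximal information from the two conditions \eqref{eq:der1} and \eqref{eq:der2}, which in the case $c_{ij}=a_{ij}$ become purely combinatorial statements about $G$.

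First, I would analyze \eqref{eq:der1}, which reads $a_{jk}d_{ij}+a_{ik}d_{ji}=0$ for every $k$ and every pair $i\neq j$. Choosing $k\in \mathcal{N}(j)\setminus\mathcal{N}(i)$ forces $d_{ij}=0$; choosing $k\in \mathcal{N}(i)\setminus\mathcal{N}(j)$ forces $d_{ji}=0$; and choosing $k\in \mathcal{N}(i)\cap\mathcal{N}(j)$ yields the antisymmetric relation $d_{ij}=-d_{ji}$. Because $G$ is connected and $|V|\geq 3$, every vertex has a nonempty neighborhood, and an elementary case split on $\mathcal{N}(i)$ versus $\mathcal{N}(j)$ shows that $d_{ij}=d_{ji}=0$ whenever $i$ and $j$ are not twins, while for $i\sim_{t}j$ we only retain the antisymmetric relation.

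Next, I would feed this into \eqref{eq:der2}, i.e.\ $\sum_{k}a_{ik}d_{kj}=2a_{ij}d_{ii}$. After Step 1, only $k=j$ and those $k$ in the twin class of $j$ with $k\neq j$ can contribute to the sum, and by the hypothesis $\Gamma_{3}(G)=\emptyset$ there is at most one such $k$. For a vertex $v$ lying in a twin class $T$ adjacent to a class $T'$ containing $i$, this yields $d_{vv}=2d_{ii}$ when $|T|=1$, and the pair of relations $d_{vv}-d_{vv'}=2d_{ii}$ together with $d_{v'v'}+d_{vv'}=2d_{ii}$ when $|T|=2$ and $T=\{v,v'\}$; summing across $T$ gives the uniform identity $\sum_{v\in T}d_{vv}=2|T|d_{ii}$. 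Comparing these relations for two vertices $i,i'$ of the same size-two class $T'$, and separately swapping the roles of $T$ and $T'$, I would conclude \emph{(a)} that $d_{ii}$ depends only on the twin class of $i$, giving a well-defined common value $d_{T'}$, and \emph{(b)} that $d_{vv}$ is constant across each size-two class $T$.

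Once \emph{(a)} and \emph{(b)} are in place, the identity above reads $|T|d_{T}=2|T|d_{T'}$ for any pair of adjacent twin classes, whence $d_{T}=2d_{T'}$ and, by symmetry, $d_{T'}=2d_{T}$, forcing $3d_{T}=0$. Since $\mathrm{char}(\mathbb{K})=0$ and connectivity of $G$ together with $|V|\geq 3$ guarantees that every twin class admits an adjacent class, this yields $d_{ii}=0$ for every $i$. Plugging this back into the size-two relation $d_{vv}-d_{vv'}=2d_{ii}=0$ kills the off-diagonal entries as well, and combined with Step~1 we obtain $d\equiv 0$. The main obstacle I anticipate is the careful bookkeeping required to prove \emph{(b)}: the constancy of diagonal entries on a size-two class is not immediate from \eqref{eq:der2} applied with $v\in T$ in the second index alone, and requires using \eqref{eq:der2} again with the size-two class playing the role of the first index, carefully handling all four combinations of sizes of $T$ and $T'$.
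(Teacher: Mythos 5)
Your argument is correct, and while it rests on the same two pillars as the paper's proof --- the combinatorial reformulation of \eqref{eq:der1}--\eqref{eq:der2} and the ``doubling'' relation between diagonal entries of neighbouring vertices --- the endgame is organized genuinely differently. Your Step 1 is exactly Proposition \ref{prop:conditions}(i)--(iii) together with Lemma \ref{lemma:algum}: $d_{ij}\neq 0$ off the diagonal forces $i\sim_t j$ and $d_{ij}=-d_{ji}$. The paper then argues by contradiction, and the bulk of its work is Lemma \ref{prop:necessarylemma}, which shows via an explicit computation with a common neighbour (culminating in $4d_{33}=2d_{11}$ and $3d_{11}=d_{21}$) that a nonzero off-diagonal entry forces its two indices into a twin class of size at least three. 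You bypass that lemma entirely: summing $\sum_{k\in\mathcal{N}(i)}d_{kv}=2d_{ii}$ over all $v$ in the twin class $T$ of a neighbour makes the off-diagonal contributions cancel in pairs by antisymmetry, yielding the clean identity $\sum_{v\in T}d_{vv}=2|T|\,d_{ii}$; from this, $d_T=2d_{T'}$ and $d_{T'}=2d_T$ for adjacent classes, so all diagonal entries vanish, and the surviving off-diagonal entries inside size-two classes die by back-substitution. This is in effect a uniform version of the doubling argument the paper runs twice (once in Lemma \ref{lema:1} for the all-off-diagonals-zero case, once inside Lemma \ref{prop:necessarylemma} for the size-two-class case), and your cancellation-by-summation makes the size-two case no harder than the singleton case; the trade-off is that the paper still needs Lemma \ref{prop:necessarylemma} for Theorem \ref{theo:principal}, so the simplification does not propagate beyond this statement. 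One point worth making explicit in a write-up: since the graphs are loopless, twins are never adjacent, so each twin class is an independent set and the adjacent class $T'$ you invoke (which exists by connectivity and $|V|\geq 2$) is automatically distinct from $T$.
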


The previous result establishes a criterium to find evolution algebras associated to graphs for which the only derivation is the null map, see Fig. \ref{FIG:graphsderzero} for an illustration of graphs with $\Gamma_3(G)=\emptyset$. It is worth pointing out that, in the case of evolution algebras associated to graphs, it provides an alternative to the criteria obtained by \cite{COT} where the rank of the matrix of $\A(G)$ relative to $S$ should be computed. As a sideline, as we shall see later, our criteria depends on the notion of twin vertices, which has been used in the literature for understanding combinatorial properties of graphs, see for instance \cite{abadi, charon, levit} and references therein. Thus, our work also provides a new application of the twin partition of a graph.\\% in the context of the space of derivations of an evolution algebra.\\

\bigskip
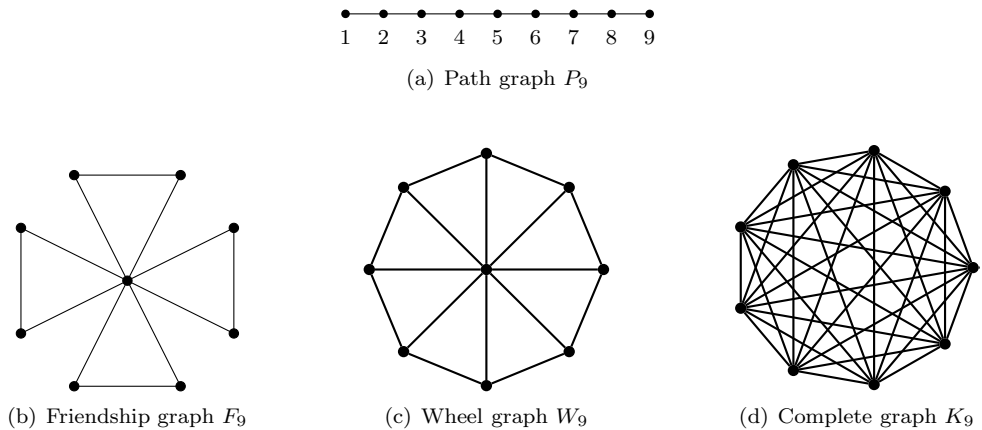
\begin{figure}[h]
\begin{center}

\subfigure[][Path graph $P_9$]{
\begin{tikzpicture}[scale=0.5]

% reta
\draw (-3,-2) -- (5,-2);
\draw (-3,-2.2) node[below,font=\footnotesize] {$1$};

% particula

\draw [very thick] (-3,-2) circle (2pt);
\filldraw [black] (-3,-2) circle (2pt);
\draw (-2,-2.2) node[below,font=\footnotesize] {$2$};
\draw [very thick] (-2,-2) circle (2pt);
\filldraw [black] (-2,-2) circle (2pt);
\draw [very thick] (-1,-2) circle (2pt);
\filldraw [black] (-1,-2) circle (2pt);
\draw (-1,-2.2) node[below,font=\footnotesize] {$3$};
\draw [very thick] (0,-2) circle (2pt);
\filldraw [black] (0,-2) circle (2pt);
\draw (0,-2.2) node[below,font=\footnotesize] {$4$};
\draw [very thick] (1,-2) circle (2pt);
\filldraw [black] (1,-2) circle (2pt);
\draw (1,-2.2) node[below,font=\footnotesize] {$5$};
\draw [very thick] (2,-2) circle (2pt);
\filldraw [black] (2,-2) circle (2pt);
\draw (2,-2.2) node[below,font=\footnotesize] {$6$};
\draw [very thick] (3,-2) circle (2pt);
\filldraw [black] (3,-2) circle (2pt);
\draw (3,-2.2) node[below,font=\footnotesize] {$7$};
\draw [very thick] (4,-2) circle (2pt);
\filldraw [black] (4,-2) circle (2pt);
\draw (4,-2.2) node[below,font=\footnotesize] {$8$};
\draw [very thick] (5,-2) circle (2pt);
\filldraw [black] (5,-2) circle (2pt);
\draw (5,-2.2) node[below,font=\footnotesize] {$9$};
\end{tikzpicture}

}

\vspace{.5cm}
 \subfigure[][Friendship graph $F_9$]{

\begin{tikzpicture}[scale=0.7]

% reta
\draw (0,0) -- (2,1)--(2,-1)--(0,0);
\draw (0,0) -- (-2,1)--(-2,-1)--(0,0);
\draw (0,0) -- (-1,2)--(1,2)--(0,0);
\draw (0,0) -- (-1,-2)--(1,-2)--(0,0);

% particula

\filldraw [black] (0,0) circle (2.5pt);
%\draw (-1.5,1.7) node[above,font=\footnotesize] {$1$};
\filldraw [black] (2,1) circle (2.5pt);
%\draw (1.5,1.7) node[above,font=\footnotesize] {$2$};
\filldraw [black] (2,-1) circle (2.5pt);
%\draw (-1,-1.7) node[below,font=\footnotesize] {$3$};
\filldraw [black] (-2,-1) circle (2.5pt);
%\draw (1,-1.7) node[below,font=\footnotesize] {$4$};
\filldraw [black] (-2,1) circle (2.5pt);
\filldraw [black] (-1,2) circle (2.5pt);
\filldraw [black] (-1,-2) circle (2.5pt);
\filldraw [black] (1,2) circle (2.5pt);
\filldraw [black] (1,-2) circle (2.5pt);

\end{tikzpicture}

}\qquad \qquad \subfigure[][Wheel graph $W_9$]{

\begin{tikzpicture}[scale=.7]
\GraphInit[vstyle=Simple]
    \SetGraphUnit{1}
    \tikzset{VertexStyle/.style = {shape = circle,fill = black,minimum size = 2.5pt,inner sep=1.5pt}}
  \begin{scope}[xshift=12cm]
\grWheel[RA=2.2]{9}%
\end{scope}
\end{tikzpicture}

}\qquad  \qquad \subfigure[][Complete graph $K_9$]{\label{subfig:complete}
\begin{tikzpicture}[scale=0.35]
\GraphInit[vstyle=Simple]
    \SetGraphUnit{1}
%    \SetUpEdge[style={-,bend right=15}]
    \tikzset{VertexStyle/.style = {shape = circle,fill = black,minimum size = 2pt,inner sep=1.5pt}}
  \begin{scope}[xshift=12cm]
  \grComplete[RA=4.5]{9}
  \end{scope}
\end{tikzpicture}
}

\end{center}
\caption{Illustration of graphs with $\Der(\A(G))=\{0\}$.}\label{FIG:graphsderzero}
\end{figure}

\bigskip
\begin{exa} {\bf Path graph $P_n$}. Consider the path graph with $n$ vertices $P_n$, see Fig. \ref{FIG:graphsderzero}(a), and denote its adjacency matrix by $A_{P_n}$. Since $P_n$ is an example of a twin-free graph, if $d\in\Der(\A(P_{n}))$ then by Theorem \ref{prop:necessary} we have $d=0$. For the sake of comparison, let us show the same result by means of the criteria obtained by \cite{COT}. It is known that $\det(A_{P_{n}})\neq 0$ if, and only, if $n$ is even, see \cite[Proposition 2.4]{Abdollahi}. So \cite[Theorem 2.1]{COT} implies that $d$ is zero whether $n$ is even. Let us now assume $n$ odd, where a straightforward calculation gives $\rank(A_{P_n})=n-1$, and then the results in \cite{COT} apply. In this case, it is not difficult to see that we can write:
$$e_n^2=\sum_{i=1}^{n-1} b_i\, e_i^2,$$
where
$$b_{i}=\left\{
\begin{array}{cl}
0, &\text{ if }i=2k, \text{ for }k\in\{1,\ldots,(n-1)/2\},\\[.2cm]
(-1)^{k-1},&\text{ if }i=n-2k, \text{ for }k\in\{1,\ldots,(n-1)/2\}.  
\end{array}\right.
$$

\smallskip
\noindent
Therefore we can use Lemma 2.2 in \cite{COT} to conclude that $d$ is zero for $n$ odd too. 
\end{exa}

\smallskip
\begin{exa} {\bf Wheel graph $W_n$}. Consider the wheel graph $W_n$, which is a graph with $n$ vertices, $n\geq 4$, formed by connecting a single vertex, called center, to all the vertices of an $(n-1)$-cycle, see Fig. \ref{FIG:graphsderzero}(c). We denote the central vertex by $n$. If $A_{W_n}$ is the adjacency matrix of $W_n$ it is known that $\det(A_{W_n})= 0$ if, and only, if $n \equiv 1\,\,  (\bmod {\,4})$, see \cite[Corollary 2.2]{jeib} for more details. Thus we consider $W_n$ such that $n \equiv 1\,\, (\bmod{\,4})$, since otherwise the space of derivations is zero by \cite[Theorem 2.1]{COT}. We point out that whether $n \equiv 1\,\,(\bmod{\,4})$ it is possible to show that $\rank(A_{W_n})=n-2$, so the results in  \cite{COT} do not apply. We emphasize that Theorem \ref{prop:necessary} is enough to guarantee, also in this case, that $\Der(\A(W_n))=\{0\}$.
\end{exa}

At this point one may ask: what we can say about $\Der(\A(G))$ for those graphs $G$ such that $\Gamma_3(G)\neq \emptyset$? The answer of this question is the spirit behind the following theorem, which provides a natural and intrinsic characterization of $\Der(\A(G))$ depending on the twin classes with at least three elements of $G$.

%We answer to this question in the following theorem. 

\begin{theorem}\label{theo:principal}
Let $G=(V,E)$ be a finite graph, with $|V|\geq 3$, and let $d\in \Der(\A(G))$. If  $\Gamma_3(G)=\{\T_1,\ldots,\T_m\}$, then $d$ is in one of the following forms up to basis permutation:

\smallskip
\begin{equation}\label{eq:matdermpartite}
 \left( 
 \begin{tikzpicture}[scale=0.7,baseline=-1mm] 
%\draw [magenta, dashed, line width=.5mm] (8,-5) rectangle (10,-3); 
%\draw [magenta, dashed, line width=.5mm] (2,2) rectangle (4,-4); 
\draw (1,4) node[font=\large] {$A_{1}(d)$};
\draw (3,2) node[font=\large] {$A_{2}(d)$};
\draw (5,0) node[font=\large] {$\ddots$};
\draw (7,-2) node[font=\large] {$A_{m}(d)$};
\draw (9,-4) node[font=\large] {$\bf 0$};

\draw (3,4) node[font=\large] {$\bf 0$};
\draw (5,4) node[font=\large] {$\cdots$};
\draw (7,4) node[font=\large] {$\bf 0$};
\draw (9,4) node[font=\large] {$\bf 0$};

\draw (1,2) node[font=\large] {$\bf 0$};
\draw (5,2) node[font=\large] {$\cdots$};
\draw (7,2) node[font=\large] {$\bf 0$};
\draw (9,2) node[font=\large] {$\bf 0$};

\draw (1,0) node[font=\large] {$\vdots$};
\draw (3,0) node[font=\large] {$\vdots$};
\draw (7,0) node[font=\large] {$\vdots$};
\draw (9,0) node[font=\large] {$\vdots$};

\draw (1,-2) node[font=\large] {$\bf 0$};
\draw (3,-2) node[font=\large] {$\bf 0$};
\draw (5,-2) node[font=\large] {$\cdots$};
\draw (9,-2) node[font=\large] {$\bf 0$};

\draw (1,-4) node[font=\large] {$\bf 0$};
\draw (3,-4) node[font=\large] {$\bf 0$};
\draw (5,-4) node[font=\large] {$\cdots$};
\draw (7,-4) node[font=\large] {$\bf 0$};

\draw [dashed, line width=.05mm] (2,5) -- (2,-5); 
\draw [dashed, line width=.05mm] (4,5) -- (4,-5); 
\draw [dashed, line width=.05mm] (6,5) -- (6,-5); 
\draw [dashed, line width=.05mm] (8,5) -- (8,-5); 

\draw [dashed, line width=.05mm] (0,3) -- (10,3); 
\draw [dashed, line width=.05mm] (0,1) -- (10,1); 
\draw [dashed, line width=.05mm] (0,-1) -- (10,-1); 
\draw [dashed, line width=.05mm] (0,-3) -- (10,-3); 

\end{tikzpicture} \right),
\end{equation}

\bigskip
\noindent
where ${\bf 0}$ denotes blocks of zeros, $A_{\ell}(d):= U_{\ell}(d) - U_{\ell}(d)^T$, and $U_{\ell}(d):=(u_{ij})$ is the $a_{\ell}$-dimensional upper triangular matrix, with $a_{\ell}:=|\T_{\ell}|$ for $\ell\in \{1,\ldots,m\}$, given by
\begin{equation}\label{eq:matrizes}
u_{ij}:=\left\{
\begin{array}{cl}
0,&\text{ if }i\geq j,\\[.3cm]
 d_{(s(\ell)+i)\, (s(\ell)+ j)}, & \text{ if }i<j;
\end{array}\right.
\end{equation}
\noindent
where $s(i):=\sum_{j=1}^{i -1} a_j$,   for  $i \in \{2, \ldots, m\}$ and $s(1):=0$. Moreover, $d$ satisfies the additional condition: for $\ell\in\{1,\ldots,m\}$,
\begin{equation}\label{eq:enunciadocond}
\sum_{k\in \mathcal{T}_{\ell}}d_{ki}=0, \text{ for any }i\in \mathcal{T}_{\ell}.
\end{equation}
\end{theorem}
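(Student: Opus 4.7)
The plan is to systematically exploit the derivation equations \eqref{eq:der1} and \eqref{eq:der2} with $c_{ij}=a_{ij}$, and to use the twin structure of $G$ to extract four facts about the coefficients $d_{ij}$: (i) $d_{ij}=0$ whenever $i\neq j$ are not twins in a common class of size at least three; (ii) $d_{ij}+d_{ji}=0$ for twins $i,j$ in a class of $\Gamma_3(G)$; (iii) $d_{ii}=0$ for every $i\in V$; and (iv) the column-sum identity \eqref{eq:enunciadocond}. Facts (i) and (ii) produce the block form of the matrix of $d$, while (iii) forces the diagonal of each block to vanish, which together with (ii) gives each block the shape $A_\ell(d)=U_\ell(d)-U_\ell(d)^T$.

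The initial reductions come from \eqref{eq:der1}. Taking $k=i$ gives $a_{ji}d_{ij}=0$, so $d_{ij}=0$ on every edge $(i,j)$. For non-adjacent non-twin pairs $i,j$, picking $k\in \mathcal{N}(i)\setminus \mathcal{N}(j)$ (possible since $\mathcal{N}(i)\neq \mathcal{N}(j)$) yields $d_{ji}=0$, and a further application of \eqref{eq:der1} with any $k\in \mathcal{N}(j)$, combined with $d_{ji}=0$, kills $d_{ij}$. For twins $i,j$, choosing $k\in \mathcal{N}(i)=\mathcal{N}(j)$ produces exactly $d_{ij}+d_{ji}=0$, proving (ii). Comparing the two expressions for $d(e_i^2)=d(e_j^2)$ for twins $i,j$ also gives $d_{ii}=d_{jj}$, so the diagonal is constant on each twin class; denote its common value on $\T_\ell$ by $D_\ell$. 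The remaining case of (i), doubleton twin classes $\T_\ell=\{i,j\}$, is handled by applying \eqref{eq:der2} at a common external neighbour $i'$ (available by connectedness and $|V|\geq 3$): the sum $\sum_{k\in \mathcal{N}(i')}d_{kj}$ collapses through the previous vanishings to $d_{ij}+d_{jj}=2d_{i'i'}$, and exchanging $i$ with $j$ to $d_{ji}+d_{ii}=2d_{i'i'}$; subtracting and invoking $d_{ii}=d_{jj}$ together with $d_{ij}+d_{ji}=0$ forces $d_{ij}=d_{ji}=0$.

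Fix a class $\T_\ell\in \Gamma_3(G)$ and an external neighbour $i'\in \mathcal{N}(\T_\ell)\setminus \T_\ell$. Applying \eqref{eq:der2} with indices $(i',j)$, $j\in \T_\ell$, and discarding terms killed by (i) gives $\sum_{k\in \T_\ell}d_{kj}=2d_{i'i'}$; summing over $j\in \T_\ell$ and cancelling antisymmetric pairs via (ii) produces $d_{i'i'}=D_\ell/2$. Let $\T'$ denote the twin class containing $i'$. A symmetric application of \eqref{eq:der2} with indices $(v,i')$, $v\in \T_\ell$, yields $\sum_{k\in \T'}d_{ki'}=2D_\ell$, and a case analysis on $|\T'|$ now forces $D_\ell=0$: when $|\T'|\leq 2$ the left-hand side reduces to $d_{i'i'}=D_\ell/2$, so $D_\ell/2=2D_\ell$, hence $D_\ell=0$; when $|\T'|\geq 3$, rerunning the same reasoning with the roles of $\T_\ell$ and $\T'$ swapped yields $d_{vv}=D'/2$ for the common diagonal value $D'$ on $\T'$, which combined with $D'=d_{i'i'}=D_\ell/2$ and $d_{vv}=D_\ell$ gives $D_\ell=D'/2=D_\ell/4$, hence $D_\ell=0$. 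This establishes (iv) and settles (iii) on every $\T_\ell\in \Gamma_3(G)$ and on every external neighbour of such a class. The diagonal entries at the remaining vertices $v\in V\setminus\Gamma_3(G)$ are then cleared by equating the two forms of $d(e_v^2)$: on any edge $(v,m)$ with $m\in \Gamma_3(G)$ the identity collapses via (iv) to $d_{vv}=0$, while on an edge with $m\notin \Gamma_3(G)$ it reads $d_{mm}=2d_{vv}$, which combined with the symmetric relation $d_{vv}=2d_{mm}$ obtained from $m$'s analysis forces $d_{vv}=0$.

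The main obstacle is the self-referential chain of identities in the preceding paragraph: the constraints produced by \eqref{eq:der2} around $\T_\ell\in \Gamma_3(G)$ become decisive only when confronted with the analogous constraints attached to the twin class $\T'$ of the external neighbour $i'$, and the case $|\T'|\geq 3$ requires symmetrising the argument. Careful bookkeeping is needed to check that the auxiliary terms in \eqref{eq:der2} drop out thanks to (i). Connectedness of $G$ and the hypothesis $|V|\geq 3$ are used throughout, both to guarantee the existence of the external neighbour $i'$ and to enable the final propagation of $d_{vv}=0$.
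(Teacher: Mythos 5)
Your proposal is correct and follows essentially the same route as the paper's own proof: the dichotomies you extract from \eqref{eq:der1} are Proposition \ref{prop:conditions} and Lemmas \ref{lemma:algum}--\ref{prop:necessarylemma} (off-diagonal entries survive only inside twin classes of size at least three, where they are antisymmetric), your ``$2$ versus $1/2$'' clash between the common diagonal value on a class and the diagonal values of its external neighbours is Lemma \ref{lema:final}(ii)--(iii), and the column-sum identity \eqref{eq:enunciadocond} and the block assembly match the paper's final argument. The only divergences are execution-level --- you dispose of size-two twin classes by a direct computation of \eqref{eq:der2} at a common neighbour rather than the paper's longer contradiction in Lemma \ref{prop:necessarylemma}, and you clear the diagonal outside $\bigcup_\ell \T_\ell$ edge-locally rather than by propagation along paths --- plus one harmless slip: $\mathcal{N}(i)\neq \mathcal{N}(j)$ only guarantees that \emph{one} of $\mathcal{N}(i)\setminus\mathcal{N}(j)$, $\mathcal{N}(j)\setminus\mathcal{N}(i)$ is nonempty, which suffices because your argument is symmetric in $i$ and $j$.
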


%\begin{remark}
%{\color{black}Comentar sobre os tamanhos das matrizes.}
%\end{remark}

\begin{exa} {\bf Complete $m$-partite graphs.} Let $K_{a_1, a_2, \ldots, a_m}$ be a complete $m$-partite graph with $n$ vertices, $m\geq 2$, and a partition of vertices $\{V_1,\ldots, V_m\}$ with respective sizes $a_1,  \ldots, a_m$, where $a_i \geq 3$ for $i\in \{1,2,\ldots,m\}$. See Fig. \ref{fig:npartite} for an illustration of this type of graph. Since $\sum_{i=1}^{m}a_i =n$, we have that $\A(K_{a_1, a_2, \ldots, a_m})$ is an $n$-dimensional evolution algebra with rank equals to $m$. We can see this from the adjacency matrix of the graph, $A_{K_{a_1, a_2, \ldots, a_m}}$, which is given by

\begin{equation*}
 \left( 
 \begin{tikzpicture}[scale=0.7,baseline=7mm] 
 
%\draw [magenta, dashed, line width=.5mm] (8,-5) rectangle (10,-3); 
%\draw [magenta, dashed, line width=.5mm] (2,2) rectangle (4,-4); 
\draw (1,4) node[font=\large] {${\bf 0}_{a_1\times a_1}$};
\draw (3,2) node[font=\large] {$ {\bf 0}_{a_2\times a_2} $};
\draw (5,0) node[font=\large] {$\ddots$};
\draw (7,-2) node[font=\large] {$ {\bf 0}_{a_m\times a_m}$};
%\draw (9,-4) node[font=\large] {$0$};

\draw (3,4) node[font=\large] {${\bf 1}_{a_1\times a_2}$};
\draw (5,4) node[font=\large] {$\cdots$};
\draw (7,4) node[font=\large] {${\bf 1}_{a_1\times a_m} $};
%\draw (9,4) node[font=\large] {$0$};

\draw (1,2) node[font=\large] {${\bf 1}_{a_2\times a_1}$};
\draw (5,2) node[font=\large] {$\cdots$};
\draw (7,2) node[font=\large] {$ {\bf 1}_{a_2\times a_m} $};
%\draw (9,2) node[font=\large] {$0$};

\draw (1,0) node[font=\large] {$\vdots$};
\draw (3,0) node[font=\large] {$\vdots$};
\draw (7,0) node[font=\large] {$\vdots$};
%\draw (9,0) node[font=\large] {$\vdots$};

\draw (1,-2) node[font=\large] {${\bf 1}_{a_m\times a_1}$};
\draw (3,-2) node[font=\large] {$ {\bf 1}_{a_m\times a_2}$};
\draw (5,-2) node[font=\large] {$\cdots$};
%\draw (9,-2) node[font=\large] {$0$};

%\draw (1,-4) node[font=\large] {$0$};
%\draw (3,-4) node[font=\large] {$0$};
%\draw (5,-4) node[font=\large] {$\cdots$};
%\draw (7,-4) node[font=\large] {$0$};

\draw [dashed, line width=.05mm] (2,5) -- (2,-3); 
\draw [dashed, line width=.05mm] (4,5) -- (4,-3); 
\draw [dashed, line width=.05mm] (6,5) -- (6,-3); 
%\draw [dashed, line width=.2mm] (8,5) -- (8,-3); 

\draw [dashed, line width=.05mm] (0,3) -- (8,3); 
\draw [dashed, line width=.05mm] (0,1) -- (8,1); 
\draw [dashed, line width=.05mm] (0,-1) -- (8,-1); 

\end{tikzpicture} \right)
\end{equation*}

\vspace{.5cm}
\noindent
where ${\bf 0}_{a\times b}$ (${\bf 1}_{a\times b}$) denotes the matrix of size $a\times b$ and elements equal to $0$ (equal to $1$). The important point to note here is that although the criteria developed in \cite{COT} can not be applied whether $m\leq n-2$, Theorem \ref{theo:principal} works. To see this, it is enough to notice that $\Gamma_3(K_{a_1, a_2, \ldots, a_m})=\{V_1,\ldots, V_m\}$, i.e., each subset of the partition of the graph is a twin class as well. Therefore, if $d\in \Der\left(\A(K_{a_1, a_2, \ldots, a_m})\right)$, then  $d$ is in one of the forms given by \eqref{eq:matdermpartite} up to basis permutation. For the sake of illustration, let us take $m=2$, and $a_1=a_2=3$. Theorem \ref{theo:principal} implies that if $d\in \Der\left(\A(K_{3, 3})\right)$, then $d$ is in one of the following forms up to basis permutation:

\smallskip
\begin{equation}\label{eq:matderbipartite}
 \left( 
 \begin{tikzpicture}[scale=0.35,baseline=-4mm] 

\draw (1,4) node[font=\small] {$0$};
\draw (3,4) node[font=\small] {$\alpha$};
\draw (5,4) node[font=\small] {$-\alpha$};
\draw (7,4) node[font=\small] {$0$};
\draw (9,4) node[font=\small] {$0$};
\draw (11,4) node[font=\small] {$0$};

\draw (1,2) node[font=\small] {$-\alpha$};
\draw (3,2) node[font=\small] {$0$};
\draw (5,2) node[font=\small] {$\alpha$};
\draw (7,2) node[font=\small] {$0$};
\draw (9,2) node[font=\small] {$0$};
\draw (11,2) node[font=\small] {$0$};

\draw (1,0) node[font=\small] {$\alpha$};
\draw (3,0) node[font=\small] {$-\alpha$};
\draw (5,0) node[font=\small] {$0$};
\draw (7,0) node[font=\small] {$0$};
\draw (9,0) node[font=\small] {$0$};
\draw (11,0) node[font=\small] {$0$};

\draw (1,-2) node[font=\small] {$0$};
\draw (3,-2) node[font=\small] {$0$};
\draw (5,-2) node[font=\small] {$0$};
\draw (7,-2) node[font=\small] {$0$};
\draw (9,-2) node[font=\small] {$\beta$};
\draw (11,-2) node[font=\small] {$-\beta$};

\draw (1,-4) node[font=\small] {$0$};
\draw (3,-4) node[font=\small] {$0$};
\draw (5,-4) node[font=\small] {$0$};
\draw (7,-4) node[font=\small] {$-\beta$};
\draw (9,-4) node[font=\small] {$0$};
\draw (11,-4) node[font=\small] {$\beta$};

\draw (1,-6) node[font=\small] {$0$};
\draw (3,-6) node[font=\small] {$0$};
\draw (5,-6) node[font=\small] {$0$};
\draw (7,-6) node[font=\small] {$\beta$};
\draw (9,-6) node[font=\small] {$-\beta$};
\draw (11,-6) node[font=\small] {$0$};

\draw [dashed, line width=.05mm] (6,5) -- (6,-7); 
\draw [dashed, line width=.05mm] (0,-1) -- (12,-1); 

\end{tikzpicture} \right),
\end{equation}

\smallskip
for $\alpha,\beta \in \mathbb{R}$.
\end{exa}

\section{Proofs}

\subsection{Preliminary results}

Since we are dealing with graphs, let us provide another look for the conditions \eqref{eq:der1} and \eqref{eq:der2}.

\begin{prop}\label{prop:conditions}
Let $G=(V,E)$ be a finite graph, with $|V|\geq 3$, and let $d\in \Der\left(\A(G)\right)$. Then $d$ satisfies the following conditions:
\begin{enumerate}[label=(\roman*)]
\item If $i,j\in V$, $i\neq j$, and $\mathcal{N}(i) \cap \mathcal{N}(j) \neq \emptyset$ then $d_{ij}=-d_{ji}$.
\item If $i,j\in V$, $i\neq j$, and $\mathcal{N}(i) \cap \mathcal{N}(j)^c \neq \emptyset$ then $d_{ji}=0$.
\item If $i,j\in V$, $i\neq j$, and $\mathcal{N}(i) \cap \mathcal{N}(j) = \emptyset$ then $d_{ij}=d_{ji}=0$.
\item For any $i\in V$, we have
$$
\sum_{k\in \mathcal{N}(i)} d_{kj}=\left\{
\begin{array}{cl}
0,&\text{ if }j\notin \mathcal{N}(i),\\[.2cm]
2d_{ii},&\text{ if }j\in \mathcal{N}(i).
\end{array}\right.
$$ 
\end{enumerate}
\end{prop}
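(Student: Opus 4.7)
The plan is to derive each of (i)--(iv) as a direct specialization of the general derivation conditions \eqref{eq:der1} and \eqref{eq:der2} to the graph setting, where every structure constant $c_{ij}$ equals $a_{ij}\in\{0,1\}$. So the proof will essentially consist in making appropriate choices of the index $k$ in these two identities and reading off what the $0/1$ values of the adjacency entries force.

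For parts (i) and (ii), I would work exclusively with \eqref{eq:der1}, which in the graph case reads $a_{jk}d_{ij}+a_{ik}d_{ji}=0$. To obtain (i), I would choose $k\in \mathcal{N}(i)\cap\mathcal{N}(j)$, so that $a_{ik}=a_{jk}=1$, yielding $d_{ij}+d_{ji}=0$. To obtain (ii), I would choose $k\in \mathcal{N}(i)\cap\mathcal{N}(j)^c$, so that $a_{ik}=1$ and $a_{jk}=0$; this collapses \eqref{eq:der1} to $d_{ji}=0$. One has to be careful with the asymmetric placement of indices here (the indices on $d$ are swapped relative to those on $c$), since a sloppy version of this step would produce the wrong entry.

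For part (iii), I would invoke (ii) twice. Since $G$ is connected and $|V|\geq 3$, both $\mathcal{N}(i)$ and $\mathcal{N}(j)$ are nonempty. Picking any $k\in \mathcal{N}(i)$, the hypothesis $\mathcal{N}(i)\cap\mathcal{N}(j)=\emptyset$ places $k$ in $\mathcal{N}(i)\cap\mathcal{N}(j)^c$, so (ii) gives $d_{ji}=0$; swapping the roles of $i$ and $j$ gives $d_{ij}=0$. For part (iv), I would use \eqref{eq:der2} directly: substituting $c_{ik}=a_{ik}$ collapses the sum on the left-hand side to $\sum_{k\in\mathcal{N}(i)}d_{kj}$, while the right-hand side $2a_{ij}d_{ii}$ evaluates to $2d_{ii}$ or $0$ according as $j\in\mathcal{N}(i)$ or $j\notin\mathcal{N}(i)$.

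There is no genuine obstacle here; the proposition is a direct translation of \eqref{eq:der1}--\eqref{eq:der2} into the combinatorial language of neighborhoods. The only points requiring any care are, first, the bookkeeping noted above on the asymmetry of the indices in \eqref{eq:der1}, and second, the (very mild) use of connectedness in part (iii) to guarantee that the neighborhood from which we pick the auxiliary index $k$ is nonempty.
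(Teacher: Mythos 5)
Your proposal is correct and follows essentially the same route as the paper: parts (i) and (ii) by specializing \eqref{eq:der1} with a suitable choice of $k$, part (iii) by applying (ii) in both directions (the paper likewise leaves the nonemptiness of the neighborhoods implicit, which your appeal to connectedness makes explicit), and part (iv) by reading \eqref{eq:der2} with $c_{ik}=a_{ik}$. Your careful note about the swapped indices in \eqref{eq:der1} is exactly the one point of the computation that matters, and you handle it correctly.
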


\begin{proof}
In order to prove (i) consider $i,j \in V$, with $i\neq j$, such that $\mathcal{N}(i)\cap \mathcal{N}(j) \neq \emptyset$, and let $k\in \mathcal{N}(i)\cap \mathcal{N}(j)$. Since $k$ is a neighbor of both $i$ and $j$ we have $a_{ik}=a_{jk}=1$. Therefore condition \eqref{eq:der1} implies $d_{ij} + d_{ji}=0$, or $d_{ij}=-d_{ji}$. Analogously, assume now that for some $i,j\in V$ we have $\mathcal{N}(i)\cap \mathcal{N}(j)^c \neq \emptyset$, and let $m\in \mathcal{N}(i)\cap \mathcal{N}(j)^c$. This means that $a_{im}=1$ while $a_{jm}=0$. This implies, by \eqref{eq:der1}, that $d_{ji}=0$ and the proof of (ii) is completed. Item (iii) is a consequence of (ii) since $\mathcal{N}(i)\cap \mathcal{N}(j)=\emptyset$ implies $\mathcal{N}(i)\cap \mathcal{N}(j)^c\neq \emptyset$, and $\mathcal{N}(j)\cap \mathcal{N}(i)^c\neq \emptyset$. Finally, item (iv) may be obtained by observing in \eqref{eq:der2} that for any $i\in V$, $a_{ik}=1$ if, and only if $k\in \mathcal{N}(i)$.

\end{proof}

\bigskip
In other words, condition (i) refers to pairs of vertices having at least one neighbor in common; (ii) is related to pairs of vertices for which one of them has a particular vertex as a neighbor and the other does not; (iii) is the case for which the pair of vertices do not have any neighbor in common; and (iv) gives an expression valid for any vertex of the graph. Thus Proposition \ref{prop:conditions} provides some conditions to be checked in order to look for the derivations of a given evolution algebra $\A(G)$. Moreover, these conditions depends on the topology of the graph $G$. The following result may be seen as a direct consequence of Proposition \ref{prop:conditions}.

\smallskip
\begin{corollary}\label{cor:conditions}
Let $G=(V,E)$ be a finite graph, with $|V|\geq 3$, and let $d\in \Der\left(\A(G)\right)$. Then, the derivation $d$ satisfies the following conditions:
\begin{enumerate}[label=(\roman*)]
\item For any $i\in V$,
\begin{equation}
d_{ii}=\frac{1}{2\, \deg(i)} \sum_{k\in\mathcal{N}(i)}d_{kk}.
\end{equation}
\item If $i\sim_{t} j$ then $d_{ii}=d_{jj}$.
\end{enumerate}
\end{corollary}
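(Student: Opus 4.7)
The plan is to deduce both items directly from Proposition~3.1, with (i) requiring a short computation and (ii) being essentially immediate from (i).

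For part (i), I would fix $i \in V$ and start from condition (iv) of Proposition~3.1, which gives, for every $j \in \mathcal{N}(i)$,
\[
\sum_{k \in \mathcal{N}(i)} d_{kj} = 2 d_{ii}.
\]
Summing this identity over all $j \in \mathcal{N}(i)$ yields
\[
\sum_{j \in \mathcal{N}(i)} \sum_{k \in \mathcal{N}(i)} d_{kj} = 2 \deg(i)\, d_{ii}.
\]
The key observation is then to evaluate the left-hand side using part (i) of Proposition~3.1: for any two distinct vertices $k, j \in \mathcal{N}(i)$, the vertex $i$ itself lies in $\mathcal{N}(k) \cap \mathcal{N}(j)$, so this intersection is nonempty and hence $d_{kj} = -d_{jk}$. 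Thus the off-diagonal contributions of the submatrix $(d_{kj})_{k,j \in \mathcal{N}(i)}$ cancel pairwise, and
\[
\sum_{j \in \mathcal{N}(i)} \sum_{k \in \mathcal{N}(i)} d_{kj} = \sum_{k \in \mathcal{N}(i)} d_{kk}.
\]
Combining the two displays and dividing by $2\deg(i)$ (which is nonzero since $\deg(i) \geq 1$ as $G$ is connected with $|V| \geq 3$) gives (i).

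For part (ii), I would simply note that if $i \sim_t j$ then $\mathcal{N}(i) = \mathcal{N}(j)$, so in particular $\deg(i) = \deg(j)$, and applying part (i) to both vertices gives
\[
d_{ii} = \frac{1}{2\deg(i)} \sum_{k \in \mathcal{N}(i)} d_{kk} = \frac{1}{2\deg(j)} \sum_{k \in \mathcal{N}(j)} d_{kk} = d_{jj}.
\]

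There is no real obstacle here: the only non-trivial step is recognizing that one should sum the identity in Proposition~3.1(iv) over $j \in \mathcal{N}(i)$ and exploit the fact that every pair of neighbors of $i$ automatically share the common neighbor $i$, which is precisely the hypothesis needed to invoke Proposition~3.1(i) and obtain the cancellation of the off-diagonal sum.
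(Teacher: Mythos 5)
Your proof is correct and follows essentially the same route as the paper: sum Proposition~\ref{prop:conditions}(iv) over $j\in\mathcal{N}(i)$, cancel the off-diagonal terms via the antisymmetry from Proposition~\ref{prop:conditions}(i), and deduce (ii) from (i) using $\mathcal{N}(i)=\mathcal{N}(j)$. Your justification that the intersection $\mathcal{N}(k)\cap\mathcal{N}(j)$ is nonempty because it contains $i$ itself is in fact slightly more explicit than the paper's.
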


\begin{proof}
Notice that (ii) is a direct consequence of (i). In order to prove (i), we consider Proposition \ref{prop:conditions}(iv) for $i,j\in V$, with $j\in\mathcal{N}(i)$, i.e.
\begin{equation}\label{eq:cor1}
2d_{ii}=\sum_{k\in\mathcal{N}(i)}d_{kj}.  
\end{equation}
Since Eq. \eqref{eq:cor1} holds for any $j\in \mathcal{N}(i)$, we have
$$
\sum_{j\in\mathcal{N}(i)}2d_{ii}=\sum_{j\in\mathcal{N}(i)}\sum_{k\in\mathcal{N}(i)} d_{kj},
$$
which can be written as
\begin{equation}\label{eq:cor2}
2 \deg(i) d_{ii} = \sum_{k\in\mathcal{N}(i)} d_{kk} + \sum_{\substack{j,k \in \mathcal{N}(i)\\j\neq k}}d_{kj}.
\end{equation}
Finally observe that the second term of the right side in Eq. \eqref{eq:cor2} vanishes because it includes for any $k,j \in \mathcal{N}(i)$ both $d_{jk}$ and $d_{kj}$. But $\mathcal{N}(k)\cap \mathcal{N}(j)\neq \emptyset$ so $d_{jk}=-d_{kj}$ by Proposition \ref{prop:conditions}(i). This completes the proof. 

\end{proof}

\smallskip
\begin{lemma}\label{lema:1}
Let $G=(V,E)$ be a finite graph, with $|V|\geq 3$, and let $d \in \Der\left(\A(G)\right)$. If $d_{ij}=d_{ji}=0$ for any $i,j\in V$, $i\neq j$, then $d_{ii}=0$ for any $i\in V$. 
\end{lemma}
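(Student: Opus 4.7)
The plan is to apply Proposition \ref{prop:conditions}(iv) twice, exploiting that the hypothesis forces nearly every term in the relevant sum to vanish. Fix $i \in V$ arbitrary. Since $G$ is connected and $|V|\geq 3$, we have $\deg(i)\geq 1$, so we can pick some $j \in \mathcal{N}(i)$. Applying Proposition \ref{prop:conditions}(iv) with this choice of $j$, we obtain
\[
\sum_{k\in\mathcal{N}(i)} d_{kj} \;=\; 2\,d_{ii}.
\]
Because $j\in\mathcal{N}(i)$, the index $k=j$ contributes $d_{jj}$ to the sum, while every other $k\in\mathcal{N}(i)\setminus\{j\}$ satisfies $k\neq j$, so by hypothesis $d_{kj}=0$. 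Thus the sum collapses to $d_{jj}$, yielding $d_{jj} = 2\,d_{ii}$.

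Now swap the roles of $i$ and $j$: since $i\in\mathcal{N}(j)$, the identical reasoning gives $d_{ii}= 2\, d_{jj}$. Combining the two relations,
\[
d_{ii} \;=\; 2\,d_{jj} \;=\; 4\,d_{ii},
\]
so $3\,d_{ii}=0$. Because $\mathrm{char}(\mathbb{K})=0$, this forces $d_{ii}=0$. As $i\in V$ was arbitrary, the conclusion follows.

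The only subtlety is making sure that for every $i\in V$ there exists a neighbor $j$ to pair it with, which is immediate from the standing assumption that $G$ is connected with at least three vertices. There is no substantive obstacle beyond this; the proof is essentially one application of item (iv) of Proposition \ref{prop:conditions} together with its symmetric counterpart.
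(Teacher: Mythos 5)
Your proof is correct and follows essentially the same argument as the paper's: both apply Proposition \ref{prop:conditions}(iv) to a pair of neighboring vertices $i$ and $j$, collapse each sum to a single diagonal term using the hypothesis, and combine $d_{jj}=2d_{ii}$ with $d_{ii}=2d_{jj}$ to force $d_{ii}=0$. The only cosmetic difference is that the paper phrases it as a proof by contradiction while you argue directly.
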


\begin{proof}
Let $i\in V$ such that $d_{ii}\neq 0$ and let $j \in \mathcal{N}(i)$. By applying Proposition \ref{prop:conditions}(iv) we have that 
\begin{equation}\label{eq:lem1}
2d_{ii} = \sum_{k\in\mathcal{N}(i)} d_{k j} = d_{jj},
\end{equation}
where the last equality is by the hypothesis. On the other hand,  since $i \in \mathcal{N}(j)$ we apply again Proposition \ref{prop:conditions}(iv) and we obtain
\begin{equation}\label{eq:lem2}
2d_{jj} = \sum_{k\in\mathcal{N}(j)} d_{ki} = d_{ii}.
\end{equation}
Therefore \eqref{eq:lem1} and \eqref{eq:lem2} imply $2d_{ii}=(1/2)d_{ii}$, and then $d_{ii}=0$, which is a contradiction.

\end{proof}

\smallskip
\begin{lemma}\label{lemma:algum}
Let $G=(V,E)$ be a finite graph, with $|V|\geq 3$, and let $d \in \Der\left(\A(G)\right)$. If $d_{ij}\neq 0$, for some $i,j\in V$ with $i\neq j$, then $i\sim_{t} j$. 
\end{lemma}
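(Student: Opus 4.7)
The plan is to argue by contrapositive: assuming $i\not\sim_t j$, I will derive $d_{ij}=0$ by splitting into cases according to the intersection structure of $\mathcal{N}(i)$ and $\mathcal{N}(j)$, and applying the case-by-case conclusions of Proposition \ref{prop:conditions}. The hypothesis $i\not\sim_t j$ means $\mathcal{N}(i)\neq \mathcal{N}(j)$, so the symmetric difference $\mathcal{N}(i)\triangle \mathcal{N}(j)$ is nonempty; this is the only structural fact I will need.

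First, if $\mathcal{N}(i)\cap \mathcal{N}(j)=\emptyset$, then Proposition \ref{prop:conditions}(iii) yields $d_{ij}=d_{ji}=0$, so we are done immediately. The interesting case is when $\mathcal{N}(i)\cap \mathcal{N}(j)\neq \emptyset$. Here Proposition \ref{prop:conditions}(i) gives the antisymmetry $d_{ij}=-d_{ji}$, so in particular $d_{ij}=0$ if and only if $d_{ji}=0$. Since $\mathcal{N}(i)\neq \mathcal{N}(j)$, there exists some vertex $k$ belonging to exactly one of the two neighborhoods: either $k\in \mathcal{N}(i)\cap \mathcal{N}(j)^c$, in which case Proposition \ref{prop:conditions}(ii) forces $d_{ji}=0$ (and hence $d_{ij}=0$ by the antisymmetry), or $k\in \mathcal{N}(j)\cap \mathcal{N}(i)^c$, in which case the same part (ii), applied with the roles of $i$ and $j$ reversed, gives $d_{ij}=0$ directly. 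In either subcase we conclude $d_{ij}=0$, contradicting the hypothesis $d_{ij}\neq 0$.

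The proof is genuinely short once Proposition \ref{prop:conditions} is in hand; there is no real obstacle beyond keeping track of which of (i)--(iii) applies in each situation. The only subtle point is recognizing that when $\mathcal{N}(i)\cap \mathcal{N}(j)\neq \emptyset$, the antisymmetry from part (i) is what lets us upgrade the one-sided vanishing statement of part (ii) into the two-sided statement $d_{ij}=d_{ji}=0$, which is essential because the asymmetry of the graph data may only give us a witness $k$ in $\mathcal{N}(i)\setminus \mathcal{N}(j)$ rather than in $\mathcal{N}(j)\setminus \mathcal{N}(i)$.
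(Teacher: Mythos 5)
Your proof is correct and is essentially the paper's argument run in the contrapositive direction: both rely on exactly the same three parts of Proposition \ref{prop:conditions}, with (iii) ruling out disjoint neighborhoods, (i) supplying the antisymmetry $d_{ij}=-d_{ji}$, and (ii) forcing the relevant entry to vanish whenever one neighborhood is not contained in the other. There is no substantive difference in method.
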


\begin{proof}
Let $i,j \in V$ such that  $d_{ij}\neq 0$. By Proposition \ref{prop:conditions}(iii) we have $\mathcal{N}(i)\cap \mathcal{N}(j)\neq \emptyset$, but then Proposition \ref{prop:conditions}(i) implies $d_{ji}=-d_{ij}\neq 0$. Hence, since $d_{ij}\neq 0$ and $d_{ji}\neq 0$ we conclude by Proposition \ref{prop:conditions}(ii) that $\mathcal{N}(i)\cap \mathcal{N}(j)^c = \emptyset$ and $\mathcal{N}(i)^c\cap \mathcal{N}(j)= \emptyset$, that is, $\mathcal{N}(i)= \mathcal{N}(j)$.

\end{proof}

\smallskip
\begin{lemma}\label{prop:necessarylemma}
Let $G$ be a finite graph, with $|V|\geq 3$, and let $d\in \Der(\A(G))$. If $d_{k\ell}\neq 0$ for some $k,\ell\in V$, with $k\neq \ell$, then there exists a twin class $\T_1 \subset V,$ with $|\T_1|\geq 3$, such that $k,\ell \in \T_1$.
\end{lemma}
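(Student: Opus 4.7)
The plan is to use the earlier lemmas to first place $k$ and $\ell$ in a common twin class, and then rule out the case where that twin class has exactly two elements.

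First, I would invoke Lemma~\ref{lemma:algum}: since $d_{k\ell}\neq 0$ with $k\neq \ell$, we immediately get $k\sim_t \ell$. Let $\T_1$ be the twin class containing $k$ and $\ell$; clearly $|\T_1|\geq 2$, and the only remaining task is to exclude $|\T_1|=2$. So I would argue by contradiction and \emph{assume} $\T_1=\{k,\ell\}$. The key consequence is that, by Lemma~\ref{lemma:algum}, for any $r\in V\setminus\{k,\ell\}$ one has $d_{rk}=d_{r\ell}=0$, since $r$ is not a twin of $k$ (nor of $\ell$).

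Next I would pick a neighbor $i\in \mathcal{N}(k)=\mathcal{N}(\ell)$, which exists because $|V|\geq 3$ and $G$ is connected. Since $i\in \mathcal{N}(k)$ iff $k\in \mathcal{N}(i)$, both $k$ and $\ell$ lie in $\mathcal{N}(i)$. Applying Proposition~\ref{prop:conditions}(iv) to $i$ once with $j=k$ and once with $j=\ell$, each of the two sums collapses to its $r=k$ and $r=\ell$ contributions by the previous paragraph:
\begin{equation*}
d_{kk}+d_{\ell k}=2d_{ii},\qquad d_{k\ell}+d_{\ell\ell}=2d_{ii}.
\end{equation*}
Corollary~\ref{cor:conditions}(ii) gives $d_{kk}=d_{\ell\ell}$ (since $k\sim_t \ell$), so equating the two expressions yields $d_{\ell k}=d_{k\ell}$. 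On the other hand, $\mathcal{N}(k)\cap \mathcal{N}(\ell)=\mathcal{N}(k)\neq\emptyset$, so Proposition~\ref{prop:conditions}(i) forces $d_{\ell k}=-d_{k\ell}$. Combining the two identities gives $2d_{k\ell}=0$, hence $d_{k\ell}=0$ (using $\mathrm{char}(\mathbb{K})=0$), which contradicts the hypothesis.

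The main conceptual step is realizing that $|\T_1|\geq 2$ is not quite enough: one has to rule out the twin class of size exactly two, and the cleanest way to do this is to compare the two instances of Proposition~\ref{prop:conditions}(iv) at a common neighbor of $k$ and $\ell$, exploiting the fact that outside $\{k,\ell\}$ all the relevant off-diagonal entries vanish by Lemma~\ref{lemma:algum}. No step is technically hard; the only subtlety is confirming that the sums really do reduce to just two terms, which is where the assumption $\T_1=\{k,\ell\}$ is used in an essential way.
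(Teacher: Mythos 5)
Your proof is correct, and in the decisive step it is genuinely simpler than the paper's. Both arguments start identically: Lemma~\ref{lemma:algum} gives $k\sim_t\ell$, and the whole content is ruling out a twin class of size exactly two. The paper organizes this as a case analysis on which entries $d_{1i}$, $d_{2j}$ are nonzero (its Case~1 and the first bullet of Case~2 are exactly your observation that a third nonzero off-diagonal entry would force a third twin), and in the residual case it picks a common neighbor $3$, \emph{adds} the two instances of Proposition~\ref{prop:conditions}(iv) to get $4d_{33}=2d_{11}$, and then runs a further computation through Corollary~\ref{cor:conditions}(i) and the degree of vertex~$1$ to reach $3d_{11}=d_{21}$ and finally a contradiction. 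You instead \emph{subtract} the two instances of Proposition~\ref{prop:conditions}(iv) at the common neighbor, so that $d_{kk}=d_{\ell\ell}$ cancels and you get $d_{\ell k}=d_{k\ell}$ directly, which collides with the antisymmetry $d_{\ell k}=-d_{k\ell}$ from Proposition~\ref{prop:conditions}(i); this bypasses the degree computation entirely. All the supporting details check out: a common neighbor $i$ exists by connectedness, $i\notin\{k,\ell\}$ because the graph is loopless and $\mathcal{N}(k)=\mathcal{N}(\ell)$, both $k$ and $\ell$ lie in $\mathcal{N}(i)$ so both diagonal and cross terms appear in the collapsed sums, and the reduction of the sums to two terms is exactly the contrapositive of Lemma~\ref{lemma:algum} under the assumption $\T_1=\{k,\ell\}$. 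What your route buys is brevity and transparency (one subtraction plus one antisymmetry); what the paper's route buys is essentially nothing extra here, since the additional identities it derives ($4d_{33}=2d_{11}$, $3d_{11}=d_{21}$) are not reused elsewhere.
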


\begin{proof}
Without loss of generality assume $d_{12}\neq 0$. By Lemma \ref{lemma:algum} we have that $1\sim_t 2$. From now on we shall analyze two cases:\\

\noindent
{\bf Case 1.}  Assume that there exists $i\in V\setminus\{1,2\}$ such that $d_{1i}\neq 0$. Then, again by Lemma \ref{lemma:algum}, $1\sim_t i$ and then $\T_1 \supset  \{1,2,i\}$.\\

\noindent
{\bf Case 2.} Assume that $d_{1i}=0$ for any $i\in V\setminus\{1,2\}$. Since $d_{21}\neq 0$ because of Proposition \ref{prop:conditions}(i), we have the following possible situations:\\

\noindent
$\bullet$ Suppose that there exists $j\in V\setminus\{1,2\}$ such that $d_{2j}\neq 0$. Then we can conclude, as before, $\mathcal{N}(2)=\mathcal{N}(j)$, and then $\T_1\supset \{1,2,j\}$.\\

\noindent
$\bullet$ Suppose $d_{2j}=0$ for any $j\in V\setminus\{1,2\}$, since $1\sim_t 2$ and $|V|\geq 3$ then there exists $j\in V\setminus\{1,2\}$ such that $j\in \mathcal{N}(1)$. Without loss of generality we let $j=3$. Now, observe that for any $\ell \in \mathcal{N}(3)$ we have by Proposition \ref{prop:conditions}(i), $d_{2\ell}=-d_{\ell 2}$, but $d_{2\ell}=0$ so $d_{\ell 2}=0$ as well. Thus

\begin{equation}\label{eq:propmain1}
\sum_{\substack{\ell \in \mathcal{N}(3)\\\ell\neq 1,2}} d_{\ell 2} = 0.
\end{equation}

In a similar way we obtain

\begin{equation}\label{eq:propmain1-}
\sum_{\substack{\ell \in \mathcal{N}(3)\\\ell\neq 1,2}} d_{\ell 1} = 0.
\end{equation}

Proposition \ref{prop:conditions}(iv), applied for $i=3$, $j=1$, and $i=3$, $j=2$, implies respectively the following equations:

\begin{equation}\label{eq:propmain2}
2d_{33}=d_{11} + d_{21} + \sum_{\substack{\ell \in \mathcal{N}(3)\\\ell\neq 1,2}} d_{\ell1 },
\end{equation}

and

\begin{equation}\label{eq:propmain3}
2d_{33}= d_{12} + d_{22} + \sum_{\substack{\ell \in \mathcal{N}(3)\\\ell\neq 1,2}} d_{\ell 2} .
\end{equation}

By \eqref{eq:propmain2} and \eqref{eq:propmain3}, adding the expressions, we have $4d_{33} = 2d_{11}$. The last conclusion is a consequence of \eqref{eq:propmain1}, \eqref{eq:propmain1-}, $d_{12}+d_{21}=0$ (by Proposition \ref{prop:conditions}(i)), and $d_{11}=d_{22}$ (by Corollary \ref{cor:conditions}(ii)). On the other hand, Corollary \ref{cor:conditions}(i) implies 
$$2 \deg(1) d_{11}= \sum_{k\in \mathcal{N}(1)} d_{kk},$$
but, since a similar expression like \eqref{eq:propmain2} holds for any vertex in $\mathcal{N}(1)$, we have $2d_{kk}=d_{11} + d_{21}$, and then
$$2 \deg(1) d_{11}= \sum_{k\in \mathcal{N}(1)} \left(\frac{d_{11} + d_{21}}{2}\right)= \deg(1)  \left(\frac{d_{11} + d_{21}}{2}\right).$$
This in turns implies $3d_{11}=d_{21}$, and again from \eqref{eq:propmain2} we have $2d_{33}=4d_{11}$. Therefore, it should be $d_{11}=0$ which implies $d_{21}=0$. But this is a contradiction.

\end{proof}

\smallskip
\begin{lemma}\label{lema:final}
Let $G=(V,E)$ be a finite graph, with $|V|\geq 3$, let $\T_1\subset V$ be a twin class of $G$, and let $d\in \Der(\A(G))$. Then 
\begin{enumerate}[label=(\roman*)]
\item $d_{ik}=d_{ki}=0$ for any $i\in \T_1$ and $k\in \T_1^c$;
\item $d_{ii}=0$, for any $i\in \T_1$;
\item $d_{\ell\ell}=0$, for any $\ell \in \mathcal{N}(\T_1)$;
\item $d_{ij}=-d_{ji}$, for any $i,j\in \T_1$, with $i\neq j$.
\end{enumerate}

\end{lemma}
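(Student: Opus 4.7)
The plan is to handle the four claims in the order (i), (iv), and then (ii)--(iii) jointly, with the last two collapsed into a single averaging argument built on the results already proved.

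For (i), fix $i\in\T_1$ and $k\in\T_1^c$. Since $\T_1$ is a twin class, $i\not\sim_t k$, and the contrapositive of Lemma~\ref{lemma:algum} immediately gives $d_{ik}=0$; exchanging the roles of $i$ and $k$ yields $d_{ki}=0$. For (iv), pick $i,j\in\T_1$ with $i\neq j$: connectedness together with $|V|\geq 3$ forces $\deg(i)\geq 1$, so $\mathcal{N}(i)=\mathcal{N}(j)\neq\emptyset$, and Proposition~\ref{prop:conditions}(i) delivers $d_{ij}=-d_{ji}$.

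The main step is the joint proof of (ii) and (iii). Fix any $\ell\in\mathcal{N}(\T_1)$; since $\ell$ neighbors every vertex of $\T_1$, we have $\T_1\subset\mathcal{N}(\ell)$. For each $j\in\T_1$, applying Proposition~\ref{prop:conditions}(iv) to the pair $(\ell,j)$ and splitting the sum over $\mathcal{N}(\ell)$ into its contributions from $\T_1$ and from $\mathcal{N}(\ell)\setminus\T_1$, part (i) just proved kills the second block and yields
\[
2d_{\ell\ell}=\sum_{k\in\T_1}d_{kj}.
\]
Summing this identity over $j\in\T_1$ and splitting the resulting double sum into diagonal and off-diagonal parts, Corollary~\ref{cor:conditions}(ii) turns the diagonal into $|\T_1|\,d_{ii}$ (for any fixed $i\in\T_1$), while the off-diagonal part vanishes by the antisymmetry from (iv). This gives $2d_{\ell\ell}=d_{ii}$ for every $\ell\in\mathcal{N}(\T_1)$. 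Feeding this back into Corollary~\ref{cor:conditions}(i) at $i$ produces
\[
d_{ii}=\frac{1}{2|\mathcal{N}(\T_1)|}\sum_{\ell\in\mathcal{N}(\T_1)}d_{\ell\ell}=\frac{d_{ii}}{4},
\]
so $d_{ii}=0$ and hence $d_{\ell\ell}=0$ for every $\ell\in\mathcal{N}(\T_1)$.

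The main subtlety I anticipate is the bookkeeping in the double sum over $\T_1\times\T_1$: the argument hinges on the antisymmetric cancellation of off-diagonal terms from (iv) coupled with the diagonal equality from Corollary~\ref{cor:conditions}(ii), and it must accommodate the degenerate case $|\T_1|=1$ uniformly, where (iv) is vacuous and the double sum collapses trivially to a single term $d_{ii}$, so the chain $2d_{\ell\ell}=d_{ii}=d_{ii}/4$ closes as before.
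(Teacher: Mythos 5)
Your proposal is correct and follows essentially the same route as the paper: the core of (ii)--(iii) is the identical averaging argument (deriving $2d_{\ell\ell}=\sum_{k\in\T_1}d_{kj}$ from Proposition \ref{prop:conditions}(iv), summing over $j\in\T_1$ with the off-diagonal cancellation and Corollary \ref{cor:conditions}(ii), then closing the loop with Corollary \ref{cor:conditions}(i) to force $d_{ii}=d_{ii}/4=0$). The only cosmetic difference is that you obtain (i) in one line as the contrapositive of Lemma \ref{lemma:algum}, whereas the paper re-runs the underlying case analysis on $\mathcal{N}(i)$ versus $\mathcal{N}(k)$ directly from Proposition \ref{prop:conditions}; both are valid and non-circular.
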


\begin{proof}
First we shall prove (i). Consider $i\in \T_1$ and $k\in \T_1^c$. 
\begin{itemize}
\item If $\mathcal{N}(i)\cap \mathcal{N}(k) = \emptyset$ then Proposition \ref{prop:conditions}(iii) implies $d_{ik}=d_{ki}=0$.
\item If $\mathcal{N}(i)\cap \mathcal{N}(k) \neq \emptyset$ we have by Proposition \ref{prop:conditions}(i) that $d_{ik}=-d_{ki}$. Then, since $\T_1$ is a twin class, we have two options:
\begin{itemize}
\item or $\mathcal{N}(i)\cap \mathcal{N}(k)^c \neq \emptyset$, and then Proposition \ref{prop:conditions}(ii) implies $d_{ki}=0$;
\item or $\mathcal{N}(k)\cap \mathcal{N}(i)^c \neq \emptyset$, and then Proposition \ref{prop:conditions}(ii) implies $d_{ik}=0$.
\end{itemize}
In both cases, one can conclude $d_{ik}=d_{ki}=0$ and the proof of (i) is completed.
\end{itemize}  

\smallskip
Now let us prove (ii) and (iii) together. Let $\T_1=\{1,\ldots,m\}$, let $i\in \T_1$ and let $\ell \in \mathcal{N}(i)$. By Proposition \ref{prop:conditions}(iv) we have
$$2d_{\ell \ell} =\sum_{k\in \mathcal{N}(\ell)} d_{ki},$$
but
$$\sum_{k\in \mathcal{N}(\ell)} d_{ki} = \sum_{k\in \mathcal{N}(\ell)\cap \T_1} d_{ki} + \sum_{k\in \mathcal{N}(\ell)\cap \T_1^c} d_{ki}=\sum_{k=1}^{m} d_{ki},$$
where the last equality is due to $\mathcal{N}(\ell)\cap \T_1 = \T_1$, together to the fact that $\sum_{k\in \mathcal{N}(\ell)\cap \T_1^c} d_{ki}$ vanishes by (i). So, in general we have for any $i\in \T_1$, and $\ell \in \mathcal{N}(i)$:
\begin{equation}\label{eq:some}
2 d_{\ell \ell} =\sum_{k=1}^{m} d_{ki},
\end{equation}
and taking the sum over all $i\in \T_1$ in Eq. \eqref{eq:some} one get
\begin{equation}\label{eq:some2}
2 m d_{\ell \ell} = \sum_{k=1}^m d_{kk} = m d_{11}.
\end{equation}
The first equality of Eq. \eqref{eq:some2} is due to Proposition \ref{prop:conditions}(i), while the second one is due to Corollary \ref{cor:conditions}(ii). On the other hand, Corollary \ref{cor:conditions}(i) and \eqref{eq:some2} imply for $i\in \T_1$ 
\begin{equation}\label{eq:some3}
2 \deg(i) d_{ii} = \sum_{k\in \mathcal{N}(i)} d_{kk} = \deg(i) d_{\ell \ell}.
\end{equation}
Therefore we have $2 d_{\ell \ell} = d_{ii}$ (see Eq. \eqref{eq:some2}) and $d_{ii}=(1/2)d_{\ell \ell}$ (see Eq. \eqref{eq:some3}) so $d_{ii}=d_{\ell \ell}=0$, for $i\in \T_1$ and $\ell \in \mathcal{N}(i)$. This completes the proof of (ii) and (iii).

\smallskip
Finally, we notice that (iv) is a direct consequence of Proposition \ref{prop:conditions}(i).

\end{proof}

\bigskip
\subsection{Proofs of the theorems}

\subsubsection{Proof of Theorem \ref{prop:necessary}}

Assume $d\neq 0$. By Lemma \ref{lema:1} we have that $d_{ij}\neq 0$ for some $i,j\in V$, $i\neq j$. Then Theorem \ref{prop:necessary} is a consequence of Lemma \ref{prop:necessarylemma}.

\smallskip
\subsubsection{Proof of Theorem \ref{theo:principal}}

Let  $\Gamma_3(G)=\{\T_1,\T_2,\ldots,\T_m\}$, where $m\in \{1,\ldots,\lfloor n/3 \rfloor\}$. Moreover, let $a_i :=|\T_i|$ and without loss of generality relabel the vertices of the graph as
$$\T_i=\{s(i)+1,s(i)+2,\ldots,s(i)+a_i\},$$
for $i\in \{1,\ldots,m\}$, where 
$$s(i):=\sum_{j=1}^{i-1}a_j,$$
whether $i>1$ and $s(1):=0$. In what follows we denote $\T := \bigcup_{i=1}^m \T_i$. Let $d\in \Der(\A(G))$.

The main idea behind the proof is to discover the matrix representation of $d$ by considering the twin classes of $\Gamma_3(G)$ one at a time. First let us consider $\T_1$. We shall see that we can find the values of $d_{ij}$ provided $i\in \T_1$ or $j\in \T_1$. In order to do it, we use Lemma \ref{lema:final}. Note that Lemma \ref{lema:final}(i) implies $d_{ij}=0$ if $i\in \T_1$ and $j\in \T_1^c$ or, if $j\in \T_1$ and $i\in \T_1^c$. Now we have to check the values of $d_{ij}$ for $i,j\in \T_1$. We observe that $d_{ii}=0$ for any $i\in \T_1$ thanks to Lemma \ref{lema:final}(ii). On the other hand, $d_{ij}=-d_{ji}$ for $i,j\in \T_1$ by Lemma \ref{lema:final}(iv). Hence, we obtain $A_1(d)=U_1(d)-U_{1}(d)^T$, where $U_1(d)$ is the $a_1$-dimensional matrix with elements $(u_{ij})$ satisfaying \eqref{eq:matrizes} and \eqref{eq:enunciadocond}. The latest condition is a consequence of Proposition \ref{prop:conditions}(iv). Summarizing, part of the representation matrix of $d$ has been obtained by observing the vertices in $\T_1$, see Fig. \ref{fig:exploration} for an illustration of the first step of this procedure. 

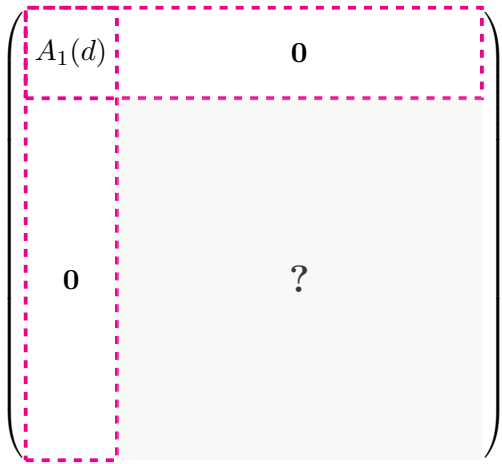
\begin{figure}[h!]
\[ \left( \begin{tikzpicture}[scale=0.6,baseline=-1mm] 
\draw [magenta, dashed, line width=.5mm] (0,3) rectangle (10,5); 
\draw [magenta, dashed, line width=.5mm] (2,5) rectangle (0,-5); 
\draw (1,4) node[font=\large] {$A_{1}(d)$};
\draw (6,4) node[font=\large] {$\bf 0$};
\draw (1,-1) node[font=\large] {$\bf 0$};
\draw (6,-1) node[font=\LARGE] {{\bf ?}};

\fill[gray!20,nearly transparent] (2,3) -- (10,3) -- (10,-5) -- (2,-5) -- cycle;

\end{tikzpicture} \right)
\]
\caption{First step in the exploration process of $d$.}\label{fig:exploration}
\end{figure}

It is not difficult to see that the same procedure can be applied to the second twin class $\T_2$ of $\Gamma_3(G)$. This in turns, allows us to discover the expression of a second part of the matrix of $d$, see Fig. \ref{fig:final}(left side). By continuing in this way, we can discover the part of $d$ associated to all the vertices in $\mathcal{T}$, see Fig. \ref{fig:final}(right side).

\begin{figure}[h!]
\[\left( \begin{tikzpicture}[scale=0.6,baseline=-1mm] 
\draw [magenta, dashed, line width=.5mm] (2,3) rectangle (10,1); 
\draw [magenta, dashed, line width=.5mm] (2,3) rectangle (4,-5); 

\fill[gray!20,nearly transparent] (4,1) -- (10,1) -- (10,-5) -- (4,-5) -- cycle;

\draw (1,4) node[font=\large] {$A_{1}(d)$};
\draw (7,4) node[font=\large] {$\bf 0$};
\draw (1,-2) node[font=\large] {$\bf 0$};
\draw (3,4) node[font=\large] {$\bf 0$};
\draw (1,2) node[font=\large] {$\bf 0$};
\draw (3,2) node[font=\large] {$A_{2}(d)$};
\draw (7,2) node[font=\large] {$\bf 0$};
\draw (3,-2) node[font=\large] {$\bf 0$};
\draw (7,-2) node[font=\LARGE] {{\bf ?}};

\draw [dashed, line width=.05mm] (0,3) -- (2,3); 
\draw [dashed, line width=.05mm] (0,1) -- (2,1);
\draw [dashed, line width=.05mm] (2,3) -- (2,5);  
\draw [dashed, line width=.05mm] (4,3) -- (4,5);  

\end{tikzpicture} \right)\quad \qquad \left( \begin{tikzpicture}[scale=0.6,baseline=-1mm] 
\draw [magenta, dashed, line width=.5mm] (8,-5) rectangle (10,-3); 

\fill[gray!20,nearly transparent] (8,-3) -- (10,-3) -- (10,-5) -- (8,-5) -- cycle;

%\draw [magenta, dashed, line width=.5mm] (2,2) rectangle (4,-4); 
\draw (1,4) node[font=\large] {$A_{1}(d)$};
\draw (3,2) node[font=\large] {$A_{2}(d)$};
\draw (5,0) node[font=\large] {$\ddots$};
\draw (7,-2) node[font=\large] {$A_{m}(d)$};
\draw (9,-4) node[font=\LARGE] {{\bf ?}};

\draw (3,4) node[font=\large] {$\bf 0$};
\draw (5,4) node[font=\large] {$\cdots$};
\draw (7,4) node[font=\large] {$\bf 0$};
\draw (9,4) node[font=\large] {$\bf 0$};

\draw (1,2) node[font=\large] {$\bf 0$};
\draw (5,2) node[font=\large] {$\cdots$};
\draw (7,2) node[font=\large] {$\bf 0$};
\draw (9,2) node[font=\large] {$\bf 0$};

\draw (1,0) node[font=\large] {$\vdots$};
\draw (3,0) node[font=\large] {$\vdots$};
\draw (7,0) node[font=\large] {$\vdots$};
\draw (9,0) node[font=\large] {$\vdots$};

\draw (1,-2) node[font=\large] {$\bf 0$};
\draw (3,-2) node[font=\large] {$\bf 0$};
\draw (5,-2) node[font=\large] {$\cdots$};
\draw (9,-2) node[font=\large] {$\bf 0$};

\draw (1,-4) node[font=\large] {$\bf 0$};
\draw (3,-4) node[font=\large] {$\bf 0$};
\draw (5,-4) node[font=\large] {$\cdots$};
\draw (7,-4) node[font=\large] {$\bf 0$};

\draw [dashed, line width=.05mm] (2,5) -- (2,-5); 
\draw [dashed, line width=.05mm] (4,5) -- (4,-5); 
\draw [dashed, line width=.05mm] (6,5) -- (6,-5); 
\draw [dashed, line width=.05mm] (8,5) -- (8,-3); 

\draw [dashed, line width=.05mm] (0,3) -- (10,3); 
\draw [dashed, line width=.05mm] (0,1) -- (10,1); 
\draw [dashed, line width=.05mm] (0,-1) -- (10,-1); 
\draw [dashed, line width=.05mm] (0,-3) -- (8,-3); 
%\draw (6,3) node[font=\LARGE] {$0$};
%\draw (1,-2) node[font=\LARGE] {$0$};
%\draw (3,3) node[font=\LARGE] {$0$};
%\draw (1,1) node[font=\LARGE] {$0$};
%\draw (3,1) node[font=\LARGE] {$A_{2}(d)$};
%\draw (6,1) node[font=\LARGE] {$0$};
%\draw (3,-2) node[font=\LARGE] {$0$};
%\draw (6,-2) node[font=\LARGE] {$?$};

\end{tikzpicture} \right)
\]
\caption{General idea to discover the matrix representation of $d$. The second step consist into discover those values of $d_{ij}$ such that $i\in \T_2$ or $j\in\T_2$ (left side). Once $d_{ij}$ is found for any $i,j$ such that $i\in \T$ or $j\in \T$ the remaining matrix to be discovered is formed by those $d_{ij}$ such that $i,j\in \T^c$ (right side).}\label{fig:final}
\end{figure}

The final step is to prove that the remaining block of the matrix representation of $d$, which is formed by those $d_{ij}$ such that $i,j \in \T^c$, is formed by zeros. By assuming $|\T|\leq n-3$ (the other cases will be considered later) we have that the remaining block is a square matrix with size of at least $3$. Notice that the existence of a pair of vertices $i,j\in \T^c$, with $i\neq j$, such that $d_{ij}\neq 0$, implies by Lemma \ref{prop:necessarylemma} the existence of another twin class with at least three vertices, but this is a contradiction by our construction. Therefore we have $d_{ij}=0$ for any $i,j\in \T^c$, with $i\neq j$. Now, let us discover the values of $d_{ii}$ whether $i\in \T^c$. Since we are considering a connected graph, we known that some vertices in $\T^c$ belongs to $\mathcal{N}\left( \T\right)$. So by Lemma \ref{lema:final}(iii) it holds $d_{ii} =0$ provided $i\in \mathcal{N}\left(\T\right)$. We shall see that the same result can be extended to any vertex $\ell$ such that $\ell \in \mathcal{N}(i)\cap \T^c$, for some $i\in \mathcal{N}(\T)$. Indeed, for the considered vertices we have $d_{ii}=0$ because of the previous argument, and we have
$$2d_{\ell \ell}=\sum_{k\in \mathcal{N}(\ell)}d_{k i}=d_{ii},$$
where the first equality comes from Proposition \ref{prop:conditions}(iv) (note that $\ell$ and $i$ are neighbors), and the second one is due to be $d_{ki}=0$ whether $k\neq i$. This means that $d_{\ell \ell}=0$. This argument can be extended to any vertex of $\T^c$. This is because since we are dealing with a connected graph, for any vertex of $\T^c$ there existe a path of vertices connecting it with a vertex in $\T$. Thus $d_{ii}=0$ for $i\in \T^c$ and the remaining block in our procedure is formed by zeros.

In order to finish the proof we have to prove that the remaining block, see Fig. \ref{fig:final}(right side), is formed by zeros also in the case of $|\T|\in\{n-1,n-2\}$. If $|\T|=n-1$, then $|\T^c|=1$ and the only vertex of $\T^c$,  labeled by $n$ according to our labels for $\T$, must be a neighbor of some vertex in $\T$. This in turns implies $d_{nn}=0$ by Lemma \ref{lema:final}(iii) and the proof is complete. On the other hand, assume $|\T|=n-2$, which implies $\T^c=\{n-1,n\}$. The same arguments than before allow us to conclude $d_{(n-1) (n-1)}=d_{nn}=0$. Furthermore, a suitable application of Proposition \ref{prop:conditions}(iv) lead us to $d_{(n-1) n}=d_{n (n-1)}=0$, so the proof is complete.

\section*{Acknowledgements}

This work was supported by FAPESP (Grants 2016/11648-0, 2017/19433-5, 2017/10555-0), CNPq (Grant 304676/2016-0), and Universidad de Antioquia. Part of this work was carried out during a visit of MLR at the ICMC - Universidade de S\~ao Paulo, and a visit of PC and PMR at the IM - Universidad de Antioquia. The authors wishes to thank these institutions for the hospitality and support. Special thanks are given to the referee, whose careful reading of the manuscript and valuable comments contributed to improve this paper.

\bigskip
%\newpage

\end{document}